\newtheorem{theorem}{Theorem}[section]
\newtheorem{prop}[theorem]{Propositon}
\newenvironment{question}{\preqn\rm}{\endpreqn}
\newenvironment{proof}{\prepf\rm}{\endprepf}
\newcommand{\Aut}{\mathop{\mathrm{Aut}}}
\newcommand{\qed}{\qquad$\Box$}
\begin{document}
\title{Simplicial complexes defined on groups}
\author{Peter J. Cameron\\University of St Andrews}
\date{}
\maketitle

\begin{abstract}
This paper makes some preliminary observations towards an extension of current
work on graphs defined on groups to simplicial complexes. I define a variety
of simplicial complexes on a group which are preserved by automorphisms of
the group, and in many cases have a relation to familiar graphs on the group.
The ones which seem to reach deepest into the graph structure are two
forms of \emph{independence complex}, and some results on the class of groups
for which these two complexes coincide are given. Other examples are treated
more briefly.

\smallskip
\noindent Keywords: group; independent set; graph; simplicial complex; power
graph

\noindent MOS: 05C25; 20B25
\end{abstract}

\section{Introduction}

The last couple of decades have seen a big upsurge in activity about graphs
defined on groups. In this author's opinion, the most important aspects of
this study are:
\begin{itemize}
\item defining classes of groups by properties of graphs defined on them;
\item finding new results about groups using graphs;
\item constructing beautiful graphs from groups.
\end{itemize}

When I lecture about this topic, it often happens (most recently at the joint
AMS/UMI meeting in Palermo in July 2024) that someone asks, ``What about
simplicial complexes?'' This is a natural question, since there are properties
which are not determined by pairwise relations on the group; but needs to be
handled with some care since, for example, generating sets for a group do not
form a simplicial complex.

A \emph{simplicial complex} $\Delta$ is a downward-closed collection of finite
subsets (called \emph{simplices} or \emph{simplexes}) of a set $X$. We assume
that every singleton of $X$ belongs to $\Delta$. For geometric reasons, a
simplex of cardinality $k$ has \emph{dimension} $k-1$. Thus a point or vertex
of $X$ has dimension $0$, while an edge $\{x,y\}$ has dimension $1$, and a
triangle $\{x,y,z\}$ has dimension~$2$.

The \emph{$k$-skeleton} of $\Delta$ consists of all the subsets of dimension at
most $k$ (thus, cardinality at most $k+1$). Thus, the $1$-skeleton is a graph.

Graph theory is a subject with centuries of history, and many parameters and
properties of graphs have been studied, including cliques and cocliques,
colourings, matchings, cycles, and spectral properties. A glance at a
survey of just one class of graphs defined on groups, the power graph
\cite{kscc}, shows this very clearly.

On the other hand,
simplicial complexes are primarily of interest to topologists: triangulating
a topological space gives a simplicial complex, whose properties converge
to those of the space as the triangulation becomes finer (for nice spaces).
There has been relatively little work on simplicial complexes in their own
right, or combinatorial studies of them. Two of the rare exceptions are
\textit{Euler complexes}~\cite{euler}, which arise in game theory, and
\textit{Ramanujan complexes}~\cite{ramanujan}, a higher dimensional version
of Ramanujan graphs, which have strong expansion properties.

I conclude this introduction with a brief description of some of the graphs
on a group $G$ which play a part in this story. Many of these can be defined
in terms of a subgroup-closed class $\mathcal{C}$ of groups; usually we assume
that $\mathcal{C}$ contains all cyclic groups. In the corresponding graph,
$x$ and $y$ are joined if and only if $\langle x,y\rangle\in\mathcal{C}$.
Examples for $\mathcal{C}$, and the corresponding graphs, include
\begin{itemize}
\item cyclic groups (the \emph{enhanced power graph});
\item abelian groups (the \emph{commuting graph});
\item nilpotent groups (the \emph{nilpotency graph});
\item soluble groups (the \emph{solubility graph}).
\end{itemize}
Other graphs include
\begin{itemize}
\item the \emph{power graph}: $x$ amd $y$ are joined if one is a power of the
other;
\item the \emph{generating graph}: $x$ and $y$ are joined if
$\langle x,y\rangle=G$.
\end{itemize}
The generating graph is null if $G$ cannot be generated by two elements. To
get around this, two further graphs occur in the work of Andrea Lucchini
and coauthors~\cite{flnrd}:
\begin{itemize}
\item the \emph{independence graph}: $x$ and $y$ are joined they are contained
in a minimal (under inclusion) generating set;
\item the \emph{rank graph}: $x$ and $y$ are joined if they are contained in a
generating set of minimum cardinality.
\end{itemize}

To conclude the introduction, here are two general questions which can be 
asked of a simplicial complex on a group:

\begin{question}
Do all maximal simplexes have the same dimension? That is, is the complex
\emph{pure?}\label{q1}
\end{question}

\begin{question}
Do the maximal simplexes generate the group? If not, do those of maximal size
generate the group?\label{q2}
\end{question}

\section{Complexes defined by independence}

I begin with two complexes on a group defined in terms of independence.
A subset $A$ of a group $G$ is \emph{independent} if none of its elements can
be expressed as a word in the other elements and their inverses; or
equivalently, if $a\notin\langle A\setminus\{a\}\rangle$ for all $a\in A$.

\subsection{The independence complex}

The \emph{independence complex} consists of all the independent subsets of $G$.

\begin{prop}
The independence complex is a simplicial complex; its vertices are all the
non-identity elements of $G$. Its $1$-skeleton is the complement of the
power graph of $G$ (with the identity removed).
\end{prop}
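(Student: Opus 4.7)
The plan is to verify in turn the three assertions: downward-closure (hence the simplicial complex property together with the singleton condition), identification of the vertex set, and identification of the $1$-skeleton. Each reduces to unpacking the definition of independence given in the preceding paragraph.

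First I would establish downward-closure. Let $A$ be independent and $B\subseteq A$. For any $b\in B$ the set $B\setminus\{b\}$ is a subset of $A\setminus\{b\}$, so $\langle B\setminus\{b\}\rangle\le\langle A\setminus\{b\}\rangle$, and since $b\notin\langle A\setminus\{b\}\rangle$ by hypothesis, $b\notin\langle B\setminus\{b\}\rangle$. Thus $B$ is independent.

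Next I would identify the vertices. A singleton $\{a\}$ is independent iff $a\notin\langle\emptyset\rangle=\{1\}$, i.e.\ iff $a\ne 1$. So the identity is excluded, every other singleton is in the complex, and together with downward-closure this confirms that we have a simplicial complex on vertex set $G\setminus\{1\}$.

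Finally, for the $1$-skeleton, a two-element set $\{x,y\}$ with $x\ne y$ lies in the complex iff $x\notin\langle y\rangle$ and $y\notin\langle x\rangle$, which is exactly the statement that neither of $x,y$ is a power of the other, i.e.\ that $x$ and $y$ are non-adjacent in the power graph of $G$. Hence the $1$-skeleton is the complement (on vertex set $G\setminus\{1\}$) of the power graph with the identity removed.

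There is no real obstacle here; the only subtlety is making sure the singleton case is treated correctly (so that $\{1\}$ is correctly excluded via $\langle\emptyset\rangle=\{1\}$), and that the equivalent reformulation $a\notin\langle A\setminus\{a\}\rangle$ is used rather than the ``word'' formulation, since the former makes the downward-closure argument immediate.
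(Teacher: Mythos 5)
Your proposal is correct and follows essentially the same route as the paper's proof: downward-closure via $\langle B\setminus\{b\}\rangle\le\langle A\setminus\{b\}\rangle$, the singleton case (which the paper simply calls clear, while you usefully spell out $\langle\emptyset\rangle=\{1\}$), and the observation that a $2$-set fails independence exactly when one element is a power of the other. No gaps; nothing further is needed.
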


\begin{proof}
Suppose that $A$ is independent and $B\subseteq A$. If
$b\in\langle B\setminus b\rangle$, then $b\in\langle A\setminus\{b\}\rangle$,
contradicting the independence of $A$; so $B$ is independent. The second
assertion is clear.

A $2$-element set fails to be independent if and only if one of its elements
is a power of the other; that is, they are joined in the power graph. \qed
\end{proof}

Consider the case where $G$ is an elementary abelian $p$-group for some
prime~$p$ (a direct product of cyclic groups of order $p$). Then $G$ can
be regarded as a vector space over the $p$-element field, so that the subgroups
of $G$ are the subspaces of the vector space; so the complex is pure, and
the maximal simplices generate the group, by basic results of linear algebra.
So for these groups, Questions~\ref{q1} and~\ref{q2} have affirmative answers.

However, for a cyclic group of non-prime order, the complex will not be
pure. For example, if $G$ is cyclic of order $pq$, where $p$ and $q$ are
distinct primes, then a generator is a singleton maximal simplex, while the
pair consisting of elements of orders $p$ and $q$ is a $2$-element maximal
simplex.

Note that, in a cyclic group of prime power order, the power graph is complete,
so the independence complex has dimension $0$.

I mention here an important theorem due to Julius Whiston~\cite{whiston}.

\begin{theorem}
An independent set in the symmetric group $S_n$ has size at most $n-1$, with
equality if and only if it is also a generating set.
\end{theorem}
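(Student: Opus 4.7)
The plan is to induct on $n$, the cases $n\le 2$ being immediate. Let $A\subseteq S_n$ be an independent set of size $k$ and put $H=\langle A\rangle$. The one general-purpose tool is the \emph{doubling bound}: any enumeration $a_1,\dots,a_k$ of $A$ produces a strictly ascending subgroup chain $1<\langle a_1\rangle<\langle a_1,a_2\rangle<\cdots<H$ in which each step has index at least two, so $|H|\ge 2^k$. This bound alone is hopelessly weak when $H=S_n$, so the argument has to exploit the permutation action of $H$.

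The first reduction is to the transitive case. Suppose $H$ has orbits $\Omega_1,\dots,\Omega_r$ with $r\ge 2$ and sizes $n_1,\dots,n_r$. I would filter $H$ by the kernels $K_j$ of the action on $\Omega_1\cup\cdots\cup\Omega_j$ and partition $A=A_1\sqcup\cdots\sqcup A_r$ according to the first orbit on which each element acts nontrivially. The delicate point, which must be verified carefully, is that for each $j$ the image of $A_j$ in $\mathrm{Sym}(\Omega_j)$ is independent of the same size: any expression of such an image as a word in the others would lift, modulo elements of $A_1\cup\cdots\cup A_{j-1}$, to a relation in $H$ contradicting the independence of $A$. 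The inductive hypothesis then yields $k\le\sum_j(n_j-1)=n-r\le n-2$, strictly better than needed. A two-level analogue -- applied first to the image of $H$ on a nontrivial block system and then to the within-block kernel -- disposes of the transitive but imprimitive case with a similarly strict bound.

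The remaining case, $H$ primitive, is where the real difficulty lies and where I expect the main obstacle. There is no elementary combinatorial reduction available here; instead one invokes the classification of primitive permutation groups. Combining the doubling bound $|H|\ge 2^k$ with the sharp order estimates on primitive subgroups of $S_n$ not containing $A_n$ (the O'Nan--Scott framework together with bounds of Bochert, Praeger--Saxl, and Liebeck, all ultimately depending on CFSG) forces $k\le n-2$ unless $H\supseteq A_n$. This simultaneously yields the inequality $k\le n-1$, and in the equality case $k=n-1$ the strictness of the two previous reductions forces $H$ to be primitive and hence, by the above, $H\supseteq A_n$. To upgrade this to $H=S_n$ I would run a short separate argument: either by applying an inductive bound to $A_n$ itself (showing it admits only strictly smaller independent sets) or by showing that an independent set of size $n-1$ in $A_n$ could be extended by a transposition to an independent set of size $n$ in $S_n$, contradicting the already-established inequality.
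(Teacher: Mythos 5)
The paper itself gives no proof of this statement: it quotes it as Whiston's theorem, with a citation to \cite{whiston}. So your proposal can only be measured against Whiston's published argument, whose overall architecture (induction on $n$, the intransitive/imprimitive/primitive trichotomy, CFSG-based order or length bounds in the primitive case) your outline does follow. However, the step you yourself flag as delicate is in fact false. Take $n=4$ and $A=\{(1,2),\ (1,2)(3,4)\}$: this set is independent, $H=\langle A\rangle$ has orbits $\Omega_1=\{1,2\}$ and $\Omega_2=\{3,4\}$, both elements act nontrivially first on $\Omega_1$, so $A_1=A$; yet both restrict to the same transposition of $\Omega_1$, so the image of $A_1$ in $\mathrm{Sym}(\Omega_1)$ has size $1$, not $2$, and $|A_1|=2>|\Omega_1|-1$, refuting your claim that each $A_j$ projects to an independent set of the same size. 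Your lifting argument cannot repair this: the coincidence of images only says that $a_1a_2^{-1}$ lies in the kernel $K_1$, which is no contradiction to the independence of $A$. The bound $k\le n-r$ is nevertheless true, but it is proved by the normal-subgroup lemma of Whiston's paper, namely $\mu(G)\le\mu(N)+\mu(G/N)$ for $N$ normal in $G$ (where $\mu$ denotes the maximal size of an independent set), whose proof does \emph{not} project the given set: one chooses $C\subseteq A$ minimal with $\langle\bar C\rangle=\langle\bar A\rangle$ (so $\bar C$ is independent in $G/N$) and multiplies each remaining element of $A$ by a suitable word in $C$ so that the corrected elements land in $N$, where one checks they are again independent. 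Losses on one orbit are compensated on later ones; your per-orbit count has no mechanism for that, and as the example shows, the per-orbit inequalities $|A_j|\le|\Omega_j|-1$ simply fail.

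There are two further gaps. In the imprimitive case the same lemma applied to $H\le S_a\wr S_b$ gives only $b(a-1)+(b-1)=ab-1=n-1$, not the ``similarly strict'' $n-2$ you assert; ruling out equality for imprimitive groups is genuine extra work in Whiston's paper, and you need it, since your treatment of the equality case rests on ``strictness of the two previous reductions.'' In the primitive case, be aware that of the bounds you name only one of Liebeck/Mar\'oti strength suffices: combined with the doubling bound $2^k\le|H|$, the Praeger--Saxl estimate $|H|\le 4^n$ yields only $k\le 2n$, and Bochert's bound (index at least $\lfloor(n+1)/2\rfloor!$) yields roughly $k\le\frac{n}{2}\log_2 n$, both useless for $k\le n-2$, so small degrees must in any case be checked separately. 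Finally, the transposition-extension idea for the equality case is unsound as stated: adjoining a transposition $t$ to an independent set $A\subseteq A_n$ can destroy independence, since some $a\in A$ may lie in $\langle(A\setminus\{a\})\cup\{t\}\rangle$ without lying in $\langle A\setminus\{a\}\rangle$. What is actually needed is the companion result $\mu(A_n)\le n-2$, which requires rerunning the whole analysis inside $A_n$ rather than a short separate argument.
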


This has the consequence that the answer to the second part of Question~\ref{q2}
is affirmative for the symmetric groups. Note that the independent sets in $S_n$
of cardinality $n-1$ have all been determined by Cameron and Cara~\cite{cc}.

\subsection{The strong independence complex}

The concept of independence in a group has been investigated by a number of
authors. However, I know no literature on the next concept.

A subset $A$ of a group $G$ is called \emph{strongly independent} if no
subgroup of $G$ containing $A$ has fewer than $|A|$ generators.
The \emph{strong independence complex} of $G$ is the complex whose simplices
are the strongly independent sets.

\begin{prop}
The strong independence complex is a simplicial complex, whose vertices are the
non-identity elements of $G$. Its $1$-skeleton is the complement of the
enhanced power graph of $G$.
\end{prop}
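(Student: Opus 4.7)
The plan is to verify the three assertions in turn: (i) the family of strongly independent sets is downward-closed; (ii) the singleton $\{x\}$ is strongly independent exactly when $x\ne e$; and (iii) a pair $\{x,y\}$ is strongly independent exactly when $\langle x,y\rangle$ is non-cyclic.

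For (i), I would take a strongly independent $A$ and a subset $B\subseteq A$, and argue by contradiction. If $H$ is a subgroup with $B\subseteq H$ and $d(H)<|B|$, take a generating set $S$ of $H$ with $|S|=d(H)$, and consider $H'=\langle S\cup(A\setminus B)\rangle$. Then $A\subseteq H'$, and $H'$ is generated by $|S|+|A\setminus B|<|B|+|A\setminus B|=|A|$ elements, contradicting the strong independence of $A$. This is the step I expect to be the main (though modest) obstacle, since one has to resist the temptation to work only with $\langle B\rangle$ and instead enlarge both $H$ and its generating set simultaneously.

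For (ii), the singleton $\{x\}$ is strongly independent iff every subgroup containing $x$ needs at least one generator, i.e.\ is nontrivial; this fails iff the trivial subgroup contains $x$, i.e.\ $x=e$. So the vertex set is $G\setminus\{e\}$, matching that of the enhanced power graph minus the identity.

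For (iii), I would observe that $\langle x,y\rangle$ is the smallest subgroup containing $\{x,y\}$, and it is contained in every other such subgroup. Hence if $\langle x,y\rangle$ is cyclic, it witnesses that $\{x,y\}$ is not strongly independent (one generator, two elements); conversely, if some subgroup $H\supseteq\{x,y\}$ has $d(H)\le 1$, then $H$ is cyclic and so is its subgroup $\langle x,y\rangle$. Thus $\{x,y\}$ fails strong independence exactly when $\langle x,y\rangle$ is cyclic, which by definition is when $x$ and $y$ are adjacent in the enhanced power graph; the complementation of adjacency gives the stated $1$-skeleton.
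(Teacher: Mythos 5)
Your proposal is correct and follows essentially the same route as the paper's own proof: the same enlargement trick $\langle S\cup(A\setminus B)\rangle$ for downward closure (the paper calls your $S$ and $H'$ by the names $C$ and $K$), the same observation on singletons, and the same two-way argument identifying non-strong-independence of $\{x,y\}$ with $\langle x,y\rangle$ lying in a cyclic subgroup, hence being cyclic. No gaps; the only difference is that you spell out the step that a subgroup of a cyclic group is cyclic, which the paper leaves implicit.
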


\begin{proof}
Let $A$ be strongly independent and $B\subseteq A$. Suppose that $B$ is not
strongly independent; so there exists a subgroup $H$ containing $B$ which
is generated by a set $C$ with $|C|<|B|$. Let $K$ be the subgroup generated
by $(A\setminus B)\cup C$. Then $K$ contains $B$, and hence $A$; and
the generating set of $K$ is smaller than $A$, contradicting strong
independence of $A$.

It is clear that a singleton $\{x\}$ is strongly independent if and only if
$x\ne1$.

For the last part, suppose that $x$ and $y$ are adjacent in the enhanced power
graph, so that $\langle x,y\rangle=\langle z\rangle$ for some $z$, so
$\{x,y\}$ is not strongly independent. Conversely, if $\{x,y\}$ is not
strongly independent, then $x,y\in\langle z\rangle$, so $x$ and $y$ are 
adjacent in the enhanced power graph. \qed
\end{proof}

In an elementary abelian $2$-group, once again linear algebra shows that
strong independence coincides with linear independence, so maximal simplices
in the strong independence complex are bases; any two of them have the same
cardinality, and any one generates the group.

This prompts a further question:

\begin{question}
For which groups do the notions of independence and strong independence
coincide?\label{q3}
\end{question}

\medskip

Note that this class of groups is subgroup-closed.

I can give a necessary condition. An \emph{EPPO} group is a group
in which all elements have prime power order. After earlier results by 
Higman and Suzuki, the EPPO groups were classified by Brandl~\cite{brandl}.
A recent and possibly more accessible account is given in the survey~\cite{cm}.

\begin{theorem}
The class of finite groups in which independence and strong independence
coincide is subgroup-closed, and is contained in the class of EPPO groups.
\end{theorem}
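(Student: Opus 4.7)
The plan is to handle the two assertions separately, starting with the easier subgroup-closedness and then attacking the EPPO containment by contrapositive via an explicit two-element witness.

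For subgroup-closedness, let $G$ have the property and $H\le G$. The first observation to record is that for $A\subseteq H$ the condition $a\in\langle A\setminus\{a\}\rangle$ is intrinsic: it holds when computed in $H$ iff it holds in $G$. So independence of $A$ in $H$ is the same as independence of $A$ in $G$. The next routine observation is that strong independence always implies independence in any group (otherwise a proper subset of $A$ would generate $\langle A\rangle$, giving a subgroup with fewer than $|A|$ generators). Hence the content of the property is the reverse implication, and I only need to show: if $A\subseteq H$ is independent in $H$, then $A$ is strongly independent in $H$. But independence of $A$ in $H$ equals independence in $G$, which by the hypothesis on $G$ yields strong independence in $G$; since every subgroup of $H$ containing $A$ is also a subgroup of $G$ containing $A$, this forces strong independence in $H$.

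For the EPPO containment, I argue by contrapositive: suppose $G$ is a finite group that is \emph{not} EPPO, so some $x\in G$ has order $n$ divisible by two distinct primes. Write $n=km$ with $\gcd(k,m)=1$ and $k,m>1$, and set $y=x^{k}$ and $z=x^{m}$, which have orders $m$ and $k$ respectively. The key observation is that $\langle y,z\rangle=\langle x^{\gcd(k,m)}\rangle=\langle x\rangle$ is cyclic, hence $1$-generated, so $\{y,z\}$ is certainly not strongly independent. On the other hand, since $|y|$ and $|z|$ are coprime, $\langle y\rangle\cap\langle z\rangle=1$, and neither element lies in the cyclic group generated by the other; hence $\{y,z\}$ is independent. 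This two-element set witnesses the failure of the coincidence of independence and strong independence in $G$, completing the proof.

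The main obstacle is really just bookkeeping: no step is deep, but one must be careful about the ambient group in which ``subgroup generated by\ldots'' is computed, and about the fact that only one of the two implications between independence and strong independence is automatic. Once those are sorted out, the witness $\{x^{k},x^{m}\}$ gives the EPPO bound essentially for free.
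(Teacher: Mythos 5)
Your proposal is correct and takes essentially the same approach as the paper: your witness pair $\{x^k,x^m\}$ is exactly the paper's pair $x^p,x^q$ for $x$ of order $pq$ (the paper phrases the step via the coincidence of the $1$-skeletons with the complements of the power and enhanced power graphs, but its inline ``straightforward proof'' is the same direct calculation you give). Your careful verification of subgroup-closedness, in particular that independence is intrinsic to the subgroup while strong independence passes down from $G$ to $H$, correctly fills in what the paper dismisses as clear.
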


\begin{proof}
The first statement is clear.

The $1$-skeletons of the independence and strong independence complexes
coincide, and hence so do their complements, the power graph and the
enhanced power graph. Groups for which these two graphs are equal are
precisely the EPPO groups, see~\cite{aacns}.

The proof is straightforward. The enhanced power graph of any cyclic group is
complete; but the power graph of $C_n$ is complete if and only if
$n$ is a prime power. (If $x$ has order $pq$, where $p$ and $q$ are distinct
primes, then $x^p$ and $x^q$ are nonadjacent in the power graph.) \qed
\end{proof}

This condition is not sufficent. One of the EPPO groups in Brandl's list is
the Suzuki group $\mathrm{Sz}(8)$. The Sylow $2$-subgroup has a minimal
generating set of size $3$, but the whole group can be generated by two
elements. We will see another example shortly.

In the other direction, we have:

\begin{theorem}
In an abelian $p$-group, for $p$ prime, independence and strong independence
coincide.
\end{theorem}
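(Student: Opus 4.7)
The plan is to show that for an independent set $A=\{a_1,\dots,a_n\}$ in a finite abelian $p$-group $G$ (written additively), every subgroup $H$ containing $A$ satisfies $d(H)\ge n$, where $d(H)$ denotes the minimum number of generators of $H$; together with the trivial reverse implication, this gives the coincidence of the two notions. The argument splits into a linear-algebra computation inside $\langle A\rangle$ and a monotonicity step that lifts the conclusion to arbitrary $H$.

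For the first part, I shall show $d(\langle A\rangle)=n$. Since $A$ generates $\langle A\rangle$, only the inequality $d(\langle A\rangle)\ge n$ needs argument. I will use the elementary formula $d(H)=\dim_{\mathbb{F}_p}(H/pH)$ for a finite abelian $p$-group $H$ and check that the images $\bar a_1,\dots,\bar a_n$ are $\mathbb{F}_p$-linearly independent in $\langle A\rangle/p\langle A\rangle$. If not, a nontrivial relation $\sum c_i\bar a_i=0$ with $p\nmid c_1$ lifts to integers $d_i$ with $\sum(c_i-pd_i)a_i=0$ in $G$; setting $e_i=c_i-pd_i$, the coefficient $e_1$ is coprime to $p$. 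Because $|a_1|$ is a power of $p$, $e_1$ is invertible modulo $|a_1|$, so one can solve for $a_1$ as an integer combination of $a_2,\dots,a_n$, contradicting the independence of $A$.

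For the second part, I shall use the monotonicity $K\le H\Rightarrow d(K)\le d(H)$ for finite abelian $p$-groups. This follows from the alternative formula $d(H)=\dim_{\mathbb{F}_p}H[p]$ (equal to the previous one by the structure theorem) together with $K[p]=K\cap H[p]\subseteq H[p]$. Applied to $K=\langle A\rangle\le H$, this yields $d(H)\ge d(\langle A\rangle)=n$ for every subgroup $H$ containing $A$, as required. The step most likely to need care is the Frattini-quotient manipulation in the first part: the invertibility of $e_1$ modulo $|a_1|$ uses exactly the fact that $|a_1|$ is a $p$-power, which is where the hypothesis on $G$ is essential. An analogous argument would fail for a mixed-order abelian group, consistent with the preceding theorem that restricts such groups to the EPPO class.
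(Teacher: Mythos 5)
Your proof is correct, but it takes a genuinely different route from the paper's. The paper argues by induction on the number $r$ of cyclic factors of $G$ that every independent set has size at most $r$: it selects an element whose component in the first factor has maximal order, and uses Gaussian elimination (subtracting suitable multiples, which preserves independence) to push the remaining elements into a sum of $r-1$ cyclic factors; applying this bound inside an arbitrary subgroup $H\supseteq A$ then gives strong independence. You instead prove the exact equality $d(\langle A\rangle)=|A|$ via the Frattini quotient, showing the images of $a_1,\ldots,a_n$ are $\mathbb{F}_p$-linearly independent in $\langle A\rangle/p\langle A\rangle$ --- your lifting step is sound: the relation $\sum(c_i-pd_i)a_i=0$ with $e_1=c_1-pd_1$ a unit modulo the $p$-power order $|a_1|$ really does express $a_1$ as a combination of the others, contradicting independence --- and then you transfer to an arbitrary subgroup $H\supseteq A$ by the monotonicity $d(K)\le d(H)$ for $K\le H$, proved via the socle formula $d(H)=\dim_{\mathbb{F}_p}H[p]$ and $K[p]=K\cap H[p]$. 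Your two standard formulas for $d$ (Frattini quotient and socle) package the structure theorem where the paper re-derives what it needs by hand inside the induction; in exchange, your argument is non-inductive, pinpoints exactly where the $p$-group hypothesis enters (invertibility of $e_1$ modulo a $p$-power), and isolates as an explicit lemma the monotonicity of the minimal number of generators under passage to subgroups --- a fact genuinely special to the abelian setting, since it fails for general groups, as the paper's own $\mathrm{Sz}(8)$ example (a $3$-generated Sylow $2$-subgroup inside a $2$-generated group) illustrates. The paper's Gaussian elimination is more elementary and self-contained; yours is shorter once the two formulas for $d$ are granted, and it yields the slightly sharper statement $d(\langle A\rangle)=|A|$ rather than just the bound $|A|\le r$.
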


\begin{proof}
Let $G$ be an abelian $p$-group, written additively. Suppose that 
\[G=C(p^{a_1})\oplus C(p^{a_2})\oplus\cdots\oplus C(p^{a_r}),\]
where $a_1\ge a_2\ge\cdots\ge a_r$.

First we prove that an independent set has cardinality at most $r$.
The proof is by induction on $r$. If $r=1$, so that $G$ is cyclic, then
its subgroups form a chain; so given any two elements, one lies in the
subgroup generated by the other. So there is no independent set of size
greater than $1$. Thus, the induction starts.

Suppose that it holds for $r-1$. Let $X=\{x_1,x_2,\ldots,x_k\}$. Each $x_i$
can be written as a linear combination of the generators $z_1,\ldots,z_r$ of
the cyclic factors of $G$. If $z_1$ never occurs, then $X$ is contained in 
a sum of $r-1$ cyclic groups, so $|X|=k\le r-1$, and we are done.

So suppose that $z_1$ does occur in such expressions. We choose an element
of $X$ whose component in the first factor has largest possible order;
without loss of generality, this is $x_1=tz_1+\cdots$. Now we use Gaussian
elimination. The first components of all the other elements of $X$ lie in
the cyclic group generated by $tz_1$; so, replacing $x_i$ by
$y_i=x_i-u_ix_1$ for suitable $u_i$, we see that the first components of
$y_1,\ldots,y_k$ are all equal to zero. Hence $\{y_2,\ldots,y_k\}$ is
contained in the sum of $r-1$ cyclic groups. But Gaussian elimination
preserves independence; so, by the inductive hypothesis, $k-1\le r-1$, whence
$k\le r$, as required.

Thus any independent set in an abelian $p$-group is not contained in a subgroup
where the number of generators is less than the size of the set, and hence is
strongly independent. \qed
\end{proof}

Both conditions in the theorem are necessary:
\begin{itemize}
\item If $G$ is abelian but not a $p$-group, then it contains an element of
order $pq$ for distinct primes $p$ and $q$, and so is not an EPPO group.
\item There are non-abelian $p$-groups which fail the condition. For
example, $C_p\wr C_p$ for $p$ an odd prime, or $C_2\wr C_4$, are
$2$-generated but contain independent sets of sizes $3$ and $4$ respectively.
However, we note that some non-abelian $p$-groups, for example dihedral
groups, satisfy the condition.
\end{itemize}

There are various other groups where independence and strong independence
coincide; for example, the non-abelian group of order $pq$, where $p$ and
$q$ are primes with $q\mid p-1$, and the alternating group $A_4$.
The problem of determining these groups should be approachable.

For example, we can show that no simple group can occur.
For Brandl's list~\cite{brandl} gives the following possibilities:
\begin{itemize}
\item $G=\mathrm{PSL}(2,q)$, for $q=4,7,8,9,17$;
\item $G=\mathrm{Sz}(q)$, for $q=8,32$;
\item $G=\mathrm{PSL}(3,4)$.
\end{itemize}
All are $2$-generated. It helps to observe that $S_4$ does not have the
property, since it is $2$-generated but the standard Moore--Coxeter generators
$(1,2)$, $(2,3)$, $(3,4)$ are independent. Now subgroup
closure excludes $\mathrm{PSL}(2,q)$ for $q=7$, $9$ and $17$.
Also $A_5$ is $2$-generated but the elements $(1,2,3)$, $(1,2,4)$, $(1,2,5)$
are independent; $\mathrm{PSL}(2,8)$ is $2$-generated but contains $(C_2)^3$.
The Suzuki group $\mathrm{Sz}(2,8)$ has already been dealt with, and similar
arguments deal with the other Suzuki group. Finally, $\mathrm{PSL}(3,4)$
contains $\mathrm{PSL}(3,2)\cong\mathrm{PSL}(2,7)$.

\section{Further examples}

Let $\mathcal{C}$ be a subgroup-closed class of groups. Then the collection
of subsets $S$ of a group $G$ for which $\langle S\rangle\in\mathcal{C}$ is
a simplicial complex, for which every element of $G$ is a point if $\mathcal{C}$
contains all cyclic groups. The $1$-skeleton of this complex is the graph
defined in terms of $\mathcal{C}$ in the introduction.

The maximal simplexes are precisely the maximal $\mathcal{C}$-subgroups of $G$.
So no maximal simplex generates $G$ unless $G\in\mathcal{C}$, in which case
$G$ is a simplex.

\medskip

The generating sets do not form a simplicial complex, but their
complements do: this is the \emph{non-generating complex}.

\medskip

Further simplicial complexes can be formed by imposing two different conditions
on the simplices. Examples include
\begin{itemize}
\item the commuting independence complex, where a simplex is an independent set
whose elements commute pairwise;
\item the non-generating independence complex, where a simplex is an
independent set which does not generate the group.
\end{itemize}

These somewhat resemble various difference graphs which have been studied
on groups, such as the difference beween the enhanced power graph and the
power graph~\cite{bcdd}. I mention here also the work of Saul Freedman on
the non-commuting, non-generating graph~\cite{freedman}, and the work of
Freedman \emph{et al.} on groups for which the power graph is equal to the
complement of the independence graph, or the enhanced power graph is the
complement of the rank graph~\cite{flnrd}.

\medskip

Various suggestions for simplicial complexes have, roughly speaking, the form:
$\{x_1,\ldots,x_k\}$ is a simplex if and only if $w(x_1,\ldots,x_k)=1$,
for some fixed word $w$. There are a couple of problems with this definition
as stated. First, it depends on the order of $x_1,\ldots,x_k$; so, instead,
we have to say, $\{x_1,\ldots,x_k\}$ is a simplex if and only if there is a
permutation $\pi\in S_k$ such that $w(x_{1\pi},\ldots,x_{k\pi})=1$. Also,
we have to insist that $x_1,\ldots,x_k$ are all distinct.

Then the problem arises of extending the definition to other dimensions.
Going up is easy: $\{x_1,\ldots,x_m\}$ is a simplex (for $m\ge k$) if and
only if all of its $k$-subsets are simplices. For going down, we have to say,
$\{x_1,\ldots,x_l\}$ is a simplex, for $l\le k$, if and only if there exist
$x_{l+1},\ldots,x_k$ such that $\{x_1,\ldots,x_l,x_{l+1},\ldots,x_k\}$ is
a simplex.

A simple example with $k=2$ is the commuting complex described earlier,
with $w=[x,y]=x^{-1}y^{-1}xy$.  Examples with $k=3$ include $w(x,y,z)=xyz$, or
$z(x,y,z)=[[x,y],z]$. In these cases, some partial symmetries already hold,
so we don't need to apply the condition for all $\pi\in S_k$. We have
\[xyz=1\Rightarrow yzx=zxy=1\]
and
\[[[x,y,z]=1\Rightarrow[[y,x],z]=1.\]

I will not discuss these examples further.

\section{The Gruenberg--Kegel complex}

The Gruenberg--Kegel graph of a finite group $G$ is the graph whose vertex set
is the set $\Pi$ of prime divisors of $|G|$, with $p$ and $q$ joined if
$G$ contains an element of order $pq$. It was introduced by Gruenberg and
Kegel in connection with the integral group ring of $G$. They determined
the groups for which the Gruenberg--Kegel graph is disconnected, but did not
publish their result. It was published by Gruenberg's student
Williams~\cite{williams}. These graphs have been the subject of a lot of
further research.

Define the \emph{Gruenberg--Kegel complex} $\mathrm{GKC}(G)$ of $G$ by the
rule that the vertex set is $\Pi$ (as above), and a subset $\Gamma$ is a
simplex if and only if $G$ contains an element whose order is the product
of the primes in $\Gamma$.

\begin{prop} Let $G$ be a finite group.
\begin{itemize}
\item The $1$-skeleton of $\mathrm{GKC}(G)$ is the Gruenberg--Kegel graph
of $G$.
\item If $G$ is nilpotent, then $\mathrm{GKC}(G)$ is a simplex.
\item For every group $G$, there is a positive integer $d(G)$ with the
property that $\mathrm{GKC}(G^n)$ is a simplex if and only if $n\ge d(G)$.
\end{itemize}
\end{prop}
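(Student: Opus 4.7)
The plan is to handle the three bullet points in turn, with the first two quick consequences of the definitions and the third requiring a brief set-cover reformulation.

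First, I check that $\mathrm{GKC}(G)$ really is a simplicial complex: if $\Gamma$ is witnessed by an element $g$ of order $\prod_{p\in\Gamma}p$ and $\Gamma'\subseteq\Gamma$, then $g$ raised to the power $\prod_{p\in\Gamma\setminus\Gamma'}p$ has order $\prod_{p\in\Gamma'}p$, so $\Gamma'$ is a simplex. The vertices are then precisely the $p\in\Pi$, and the edges are the pairs $\{p,q\}$ such that $G$ contains an element of order $pq$, which is the Gruenberg--Kegel graph by definition.

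For the second bullet, I use that a finite nilpotent group is the direct product of its Sylow subgroups. For each $p\in\Pi$ I pick an element $x_p$ of order $p$ in the Sylow $p$-subgroup; the $x_p$ pairwise commute and have pairwise coprime prime orders, so their product has order $\prod_{p\in\Pi}p$, making $\Pi$ itself a simplex.

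For the third bullet, the key observation is that $\Pi(G^n)=\Pi(G)$, and $\mathrm{GKC}(G^n)$ is a simplex if and only if $\Pi$ itself is one of its simplices, i.e.\ $G^n$ contains an element $(g_1,\ldots,g_n)$ with $\mathrm{lcm}(|g_1|,\ldots,|g_n|)=\prod_{p\in\Pi}p$. Since the target is squarefree, each $|g_i|$ must be squarefree, so the support $\Gamma_i=\{p:p\mid|g_i|\}$ is a simplex of $\mathrm{GKC}(G)$ witnessed by $g_i$, and the $\Gamma_i$ cover $\Pi$. Conversely, any cover of $\Pi$ by $m$ simplices $\Gamma_1,\ldots,\Gamma_m$ of $\mathrm{GKC}(G)$ with witnessing elements $g_1,\ldots,g_m$ yields, after padding with identities, a valid witness in $G^n$ for every $n\ge m$. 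I therefore define $d(G)$ to be the minimum number of simplices of $\mathrm{GKC}(G)$ needed to cover $\Pi$; this is finite because singletons are simplices (so $d(G)\le|\Pi|$), and the preceding equivalence gives exactly the claimed property.

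The one spot requiring care, which I flag as the main obstacle, is the squarefreeness argument in the third bullet: one has to observe that the constraint ``the lcm equals $\prod_{p\in\Pi}p$'' forces each individual $|g_i|$ to be squarefree, since otherwise the sets $\Gamma_i$ extracted from the supports need not be simplices of $\mathrm{GKC}(G)$ and the reduction to a covering problem would break down.
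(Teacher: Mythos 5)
Your proposal is correct and follows essentially the same route as the paper: both define $d(G)$ as the least number of simplices of $\mathrm{GKC}(G)$ whose union covers $\Pi$, and both construct the witnessing element of $G^n$ from commuting coordinates chosen for each covering simplex. In fact you supply a detail the paper dismisses as ``easy to see,'' namely the necessity direction, where your observation that the order of $(g_1,\ldots,g_n)$ being the squarefree number $\prod_{p\in\Pi}p$ forces each $|g_i|$ to be squarefree (so that the supports $\Gamma_i$ really are simplices of $\mathrm{GKC}(G)$ covering $\Pi$) is exactly the point needed to show $\mathrm{GKC}(G^n)$ cannot be a simplex for $n<d(G)$.
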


\begin{proof}
The first statement is clear. For the second, we note that elements of distinct
prime orders in a nilpotent group commute. The third is easy to see; in
fact $d(G)$ is the smallest integer $k$ for which $\mathrm{GKC}(G)$ is the
union of $k$ simplices. (We can assume that the union is disjoint. Then take
$x_i$ to be an element in the $i$th direct factor whose order is the product
of primes in the $i$th simplex. The elements in different factors commute,
so the product of the $x_i$ is divisible by every prime in $\Pi$.) \qed
\end{proof}

For a recent survey on the Gruenberg--Kegel graph, see~\cite{cm}.

\section{Homology and representations}

A simplicial complex possesses homology (and cohomology) groups. For complexes
defined on $G$ and invariant under $\Aut(G)$, these will be $\Aut(G)$-modules.

Homological algebra has been around for a while, so perhaps little new can be
found about homology groups. On the other hand, persistent homology is an
important new tool in data analysis, so maybe homological tools have a role
to play here.

\begin{question}
Which representations of $\Aut(G)$ are realised on homology groups
associated with natural simplicial complexes for $G$?
\end{question}


\begin{thebibliography}{99}

\bibitem{aacns}
G. Aalipour, S. Akbari, P. J. Cameron, R. Nikandish and F. Shaveisi,
On the structure of the power graph and the enhanced power graph of a group,
\textit{Electronic J. Combinatorics}, \textbf{24} (2017), paper 18.

\bibitem{bcdd}
Sucharita Biswas, Peter J. Cameron, Angsuman Das and Hiranya Kishore Dey,
On difference of enhanced power graph and power graph in a finite group,
\textit{J. Combinatorial Theory} (A), \textbf{208} (2024), 105932.

\bibitem{brandl}
Rolf Brandl,
Finite groups all of whose elements are of prime power order,
\textit{Bollettino UMI} (5) \textbf{18(A)} (1981), 491--493.

\bibitem{gog}
Peter J. Cameron,
Graphs defined on groups,
\textit{Internat. J Group Theory} \textbf{11} (2022), 43--124.

\bibitem{cc}
P. J. Cameron and Ph. Cara,
Independent generating sets and geometries for symmetric groups,
\textit{J. Algebra} \textbf{258} (2002), 641--650.

\bibitem{cm}
Peter J. Cameron and Natalia Maslova,
Criterion of unrecognizability of a finite group by its Gruenberg–Kegel graph,
\textit{J. Algebra} \textbf{607} (2022), 186--213.

\bibitem{euler}
Jack Edmonds,
Euler complexes (oiks),
\textit{Elec. Notes Discrete Math.} \textbf{36} (2010), 1289--1293.

\bibitem{freedman}
Saul D. Freedman,
The non-commuting, non-generating graph of a non-simple group,
\textit{Algebraic Combinatorics} \textbf{6} (2023), 1395--1418.

\bibitem{flnrd}
Saul D. Freedman, Andrea Lucchini, Daniele Nemmi and Colva M. Roney-Dougal,
Finite groups satisfying the independence property,
\textit{Internat. J. Algebra Comput.} \textbf{33} (2023), 509--545.

\bibitem{kscc}
Ajay Kumar, Lavanya Selvaganesh, Peter J. Cameron and T. Tamizh Chelvam,
Recent developments on the power graph of finite groups – a survey,
\textit{AKCE Internat. J. Graphs Combinatorics} \textbf{18} (2021), 65-94.

\bibitem{ramanujan}
Alexander Lubotzky and Ori Parzanchevski,
From Ramanujan graphs to Ramanujan complexes,
\textit{Phil. Trans. Royal Soc.} (A) \textbf{378} (2019), article 20180445.

\bibitem{whiston}
Julius Whiston,
Maximal independent generating sets of the symmetric group,
\textit{J. Algebra} \textbf{232} (2000), 255--268. 

\bibitem{williams}
J. S. Williams, 
Prime graph components of ﬁnite groups,
\textit{J. Algebra} \textbf{69} (1981), 487--513.

\end{thebibliography}
\end{document}